\def\RR{\mathbb{R}} 
\def\NN{\mathbb{N}} 
\def\PP{\mathbb{P}} 
\def\EE{\mathbb{E}} 
\def\ind{\mathbf{1}} 
\def\law{\mathcal{L}} 
\def\W{\mathcal{W}} 
\def\N{\mathcal{N}} 
\def\M{\mathcal{M}} 
\def\ii{\mathbf{i}} 
\newtheorem{theorem}{Theorem}
\newtheorem{lemma}[theorem]{Lemma}
\newtheorem{corollary}[theorem]{Corollary}
\theoremstyle{definition}
\title{Uniform propagation of chaos for Kac's 1D particle system}
\author{Roberto Cortez\footnote{CIMFAV, Facultad de Ingenier\'ia, Universidad de Valpara\'iso. General Cruz 222, Valpara\'iso, Chile. E-mail: \texttt{rcortez@dim.uchile.cl}. Supported by Fondecyt Postdoctoral Project 3160250.}}
\begin{document}

\maketitle

\begin{abstract}
In this paper we study Kac's 1D particle system, consisting of the velocities of $N$ particles
colliding at constant rate and randomly exchanging energies.
We prove uniform (in time) propagation of chaos in Wasserstein distance
with explicit polynomial rates in $N$,
for both the squared (i.e., the energy) and non-squared particle system.
These rates are of order $N^{-1/3}$ (almost, in the non-squared case),
assuming that the initial distribution of the limit nonlinear equation
has finite moments of sufficiently high order ($4+\epsilon$ is enough when
using the 2-Wasserstein distance).
The proof relies on a convenient parametrization of the collision
recently introduced by Hauray, as well as on a coupling
technique developed by Cortez and Fontbona.
\end{abstract}

\medskip

{\bf Keywords}: Kinetic theory, Kac particle system, Propagation of chaos.

\medskip

 {\bf  MSC 2010}:  82C40, 60K35.

%

\section{Introduction and main results}
\label{sec:intro}

In this paper we study \emph{Kac's particle system,} introduced
in \cite{kac1956} and later studied for instance in
\cite{carlen-carvalho-loss2003,carlen-carvalho-loss2000,carlen-carvalho-leroux-loss-villani2010,carrapatoso-einav2013}.
It can be described as follows: consider $N$
objects or ``particles'' characterized by their one-dimensional velocities,
subjected to the following binary random ``collisions'': when particles with
velocities $v$ and $v_*$ collide, they acquire new velocities $v'$ and $v_*'$ given by the rule
\begin{equation}
\label{eq:vv*}
(v,v_*) \mapsto (v',v_*') = (v \cos\theta - v_*\sin\theta,  v_* \cos\theta  + v \sin\theta),
\end{equation}
where $\theta \in[0,2\pi)$ is chosen uniformly at random.
This can be seen as a rotation in $\theta$
of the pair $(v,v_*)\in\RR^2$ and, as such, it preserves the energy, i.e., $v^2 + v_*^2 = v'^2 + v_*'^2$.
The system evolves continuously with time $t\geq 0$;
the times between collisions
follow an exponential law with parameter $N/2$ and the two particles
that collide are chosen randomly among all possible pairs, so each particle collides once per unit
of time on average.
The system starts at $t=0$ with some fixed symmetric distribution,
and all the previous random choices are made independently.
This description unambigously determines (the law of) the particle
system, which we denote $\mathbf{V}_t = (V_{1,t},\ldots,V_{N,t})$.

In the pioneering work \cite{kac1956}, Kac proved that for all $t\geq 0$, as $N \to \infty$,
the empirical measure of the system $\frac{1}{N} \sum_i \delta_{V_{i,t}}$
converges weakly to $f_t$ (provided that the convergence holds for $t=0$),
where $(f_t)_{t\geq 0}$ is the collection of probability measures on $\RR$
solving the so-called \emph{Boltzmann-Kac equation}:
\begin{equation}
\label{eq:BK}
\partial_t f_t(v) = \int_0^{2\pi} \int_\RR [f_t(v')f_t(v_*') - f_t(v)f_t(v_*)] dv_* \frac{d\theta}{2\pi}.
\end{equation}
This convergence is now termed \emph{propagation of chaos}, and it has been
extensively studied during the last decades for this and other, more general
kinetic models (especially the Boltzmann equation), see for instance
\cite{sznitman1989,mischler-mouhot2013} and the references therein.

Another interesting feature of this model is its behaviour as $t\to\infty$.
For instance, assuming normalized initial energy, i.e.,
$\sum_i V_{i,0}^2 = N$ a.s.,
it is known that the law of the system converges
exponentially in $L^2$ to its equilibrium, namely, the uniform distribution on
the \emph{Kac sphere} $\{\mathbf{x}\in\RR^N: \sum_i x_i^2 = N\}$,
see \cite{carlen-carvalho-loss2000} and the references therein.
As an alternative approach, one can
couple two copies of the particle system
using the same collision times and the same
angle $\theta$ (i.e., ``parallel coupling''), but with different initial conditions,
to show that the $2$-Wasserstein distance between their laws is non-increasing in time.
However, a simple and better coupling was recently
introduced in \cite{hauray2016}: note first that the post-collisional velocities in \eqref{eq:vv*} can be written as
$(v',v_*') = \sqrt{v^2 + v_*^2} (\cos(\alpha+\theta),\sin(\alpha+\theta))$,
where $\alpha \in (-\pi,\pi]$ is the angle defined by $(v,v_*) = \sqrt{v^2 + v_*^2} (\cos\alpha,\sin\alpha)$,
with the convention that all sums of angles are modulo $2\pi$;
next, note that, since $\theta$ is uniformly chosen in $[0,2\pi)$, so is $\alpha+\theta$, and
then the interaction rule
\begin{equation}
\label{eq:vv*2}
(v,v_*) \mapsto (v',v_*') = \sqrt{v^2 + v_*^2} (\cos(\theta),\sin(\theta))
\end{equation}
generates a system that has the same law than the one described by \eqref{eq:vv*}.
Using this new parametrization of the collision,
one can define a coupling that leads to contraction results in some Wasserstein
metrics, see \cite{hauray2016} for details.

Our goal in this paper is to use the parametrization \eqref{eq:vv*2} in a propagation of chaos context,
in order to obtain explicit (in $N$) and uniform-in-time rates of convergence, as $N\to\infty$,
for the law of the particles towards the solution of \eqref{eq:BK}.
We will quantify this convergence using the \emph{$p$-Wasserstein} distance:
given two probability measures $\mu$ and $\nu$ on $\RR^k$, it is defined as
\[
\W_p(\mu,\nu)
= \left( \inf \EE \frac{1}{k}\sum_{i=1}^k |X_i-Y_i|^p \right)^{1/p},
\]
where the infimum is taken over all random vectors $\mathbf{X} = (X_1,\ldots,X_k)$
and $\mathbf{Y} = (Y_1,\ldots,Y_k)$
such that $\law(\mathbf{X}) = \mu$ and $\law(\mathbf{Y}) = \nu$
(we do not specify the dependence on $k$ in our notation).
We use the \emph{normalized} distance $|\mathbf{x}-\mathbf{y}|_k^p = \frac{1}{k}\sum_i |x_i-y_i|^p$ on $\RR^k$,
which is natural when one cares about the dependence on the dimension.
A pair $(\mathbf{X},\mathbf{Y})$ attaining
the infimum is called an \emph{optimal coupling} and it can be shown
that it always exists. See for instance \cite{villani2009} for background
on optimal coupling and Wasserstein distances.

Let us fix some notation. We denote $E_N = \frac{1}{N} \sum_i V_{i,0}^2$ the (random)
mean initial energy, which is preserved, i.e., $\frac{1}{N} \sum_i V_{i,t}^2 = E_N$ for all $t\geq 0$, a.s.
We also denote $\mathcal{E} = \int_\RR v^2 f_0(dv)$, which itself is preserved
by the flow $(f_t)_{t\geq 0}$.
For a vector $\mathbf{x} = (x_1,\ldots,x_N) \in \RR^N$
we denote by $\mathbf{x}^{(2)} = (x_1^2,\ldots,x_N^2)$
the vector of squares of $\mathbf{x}$,
and we define the (empirical) probability measures
$\bar{\mathbf{x}} = \frac{1}{N}\sum_{j} \delta_{x_j}$ and
$\bar{\mathbf{x}}_i = \frac{1}{N-1}\sum_{j\neq i} \delta_{x_j}$.
Also, for a probability measure $\mu$ on $\RR$, we denote by $\mu^{(2)}$ the measure on $\RR_+$
defined by $\int \phi(v) \mu^{(2)}(dv) = \int \phi(v^2) \mu(dv)$.


\begin{theorem}
	\label{thm:main}
	Assume that $\int_\RR |v|^p f_0(dv) < \infty$ for some $p>4$, $p\neq 8$.
	Let $\gamma = \min(\frac{1}{3},\frac{p-4}{2p-4})$ and $\lambda_N = \frac{1}{4}\frac{N+2}{N-1}$.
	Then, there exists a constant $C$
	depending only on $p$ and $\int_\RR |v|^p f_0(dv)$, such that for all $t\geq 0$,
	\begin{align*}
	\EE \W_2^2(\bar{\mathbf{V}}_t^{(2)}, f_t^{(2)})
	& \leq \frac{C}{N^\gamma} + C \EE (E_N-\mathcal{E})^2 \\
	& \qquad {} +Ce^{-\lambda_N t} \W_2^2(\law(\mathbf{V}_0^{(2)}),(f_0^{(2)})^{\otimes N}).
	\end{align*}
\end{theorem}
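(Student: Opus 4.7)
The plan is to work in squared variables $\mathbf{X}_t = \mathbf{V}_t^{(2)}$, for which the Hauray parametrization \eqref{eq:vv*2} becomes the transparent rule: each $(i,j)$-collision updates $(X_i, X_j) \mapsto (U(X_i+X_j),\, (1-U)(X_i+X_j))$ with $U = \cos^2\theta$ uniform on $[0,1]$. I would build in parallel a reference system $\mathbf{Y}_t = (Y_{1,t},\ldots,Y_{N,t})$ whose components are exchangeable with common marginal $f_t^{(2)}$, where each $Y_i$ jumps as $Y_i \mapsto U(Y_i + Y_*)$ with $Y_* \sim f_t^{(2)}$.

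Following the Cortez-Fontbona strategy, I couple the two systems with a shared clock: each $(i,j)$-collision in the particle system simultaneously triggers updates of $Y_i$ using the same $U$ and of $Y_j$ using $1-U$. The partner sample needed to update $Y_i$ is taken to be $Y_*^{(i)} = T_{i,\tau}(Y_{j,\tau^-})$, where $T_{i,\tau}$ is an optimal transport map from $\bar{\mathbf{Y}}_{i,\tau^-}$ to $f_\tau^{(2)}$: this guarantees $Y_*^{(i)} \sim f_\tau^{(2)}$ (preserving the correct marginals of $\mathbf{Y}_t$) while making $Y_*^{(i)}$ as close to $Y_{j,\tau^-}$ as $\W_2(\bar{\mathbf{Y}}_{i,\tau^-}, f_\tau^{(2)})$ permits; $Y_*^{(j)}$ is built symmetrically from $Y_{i,\tau^-}$. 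The initial pair $(\mathbf{V}_0^{(2)}, \mathbf{Y}_0)$ is chosen to realize an optimal $\W_2$ coupling of $\law(\mathbf{V}_0^{(2)})$ and $(f_0^{(2)})^{\otimes N}$.

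The core computation differentiates $H(t) = \EE |\mathbf{X}_t - \mathbf{Y}_t|_N^2$. Summing contributions of all pairs at rate $1/(N-1)$, using $\EE U^2 = 1/3$, and writing $X_j - Y_*^{(i)} = (X_j - Y_j) + (Y_j - Y_*^{(i)})$, the resulting quadratic form in $a_i = X_i - Y_i$ splits into a diagonal contractive part, a mean-drift part proportional to $(\sum_i a_i)^2/N^2$, and a residual part controlled by $\W_2^2(\bar{\mathbf{Y}}_{i,t}, f_t^{(2)})$. Since the particle energy $\frac{1}{N}\sum_i X_{i,t} \equiv E_N$ is conserved and the reference mean has expectation $\mathcal{E}$, the mean-drift part is absorbed into $\EE(E_N - \mathcal{E})^2$ plus a $1/N$-type fluctuation; the residual is bounded by $C N^{-\gamma}$ via the Fournier-Guillin empirical-measure estimate under the moment hypothesis $\int |v|^p f_0(dv) < \infty$, producing $\gamma = \min(\tfrac{1}{3},\tfrac{p-4}{2p-4})$. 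This yields a differential inequality $H'(t) \leq -\lambda_N H(t) + C N^{-\gamma} + C\,\EE(E_N - \mathcal{E})^2$ with $\lambda_N = (N+2)/(4(N-1))$, which Grönwall integrates into the desired estimate on $H(t)$.

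A final triangle inequality $\EE \W_2^2(\bar{\mathbf{V}}_t^{(2)}, f_t^{(2)}) \leq 2 H(t) + 2\,\EE \W_2^2(\bar{\mathbf{Y}}_t, f_t^{(2)})$ combined with another application of Fournier-Guillin to $\bar{\mathbf{Y}}_t$ completes the proof. The principal difficulty is the sharp identification of the constant $\lambda_N = (N+2)/(4(N-1))$: the diagonal, mean-drift and residual coefficients must be bookkept exactly for this precise rate to emerge from the pairwise sum, and in particular the symmetrization of the $U$- and $(1-U)$-updates must be arranged so that the off-diagonal contributions collapse correctly. A secondary subtlety is that the shared-clock coupling leaves $\mathbf{Y}_t$ exchangeable but not i.i.d., so both the Fournier-Guillin estimate on $\bar{\mathbf{Y}}_t$ and the uniform-in-$N$ propagation of $p$-th moments along the nonlinear flow need to be justified in that exchangeable setting.
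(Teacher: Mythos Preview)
Your proposal is correct and follows essentially the same route as the paper. For the two subtleties you flag: the ``$1/N$-type fluctuation'' of $\frac{1}{N}\sum_i Y_{i,t}$ around $\mathcal{E}$ requires a separate covariance bound $|\mathrm{cov}(Y_{i,t},Y_{j,t})| \leq C/N$ proved by a direct generator computation (Lemma~\ref{lem:decorrelation}), and the Fournier--Guillin estimate for the exchangeable but non-i.i.d.\ system $\mathbf{Y}_t$ is obtained via a second coupling that builds $n$ genuinely independent nonlinear processes at $\W_2^2$-distance $O(n/N)$ from $(Y_{1,t},\ldots,Y_{n,t})$ and then optimizes over $n$ (Lemmas~\ref{lem:decoupling0} and~\ref{lem:decoupling}).
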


This yields a uniform-in-time propagation of chaos in $\W_2^2$ for the energy of the particles.
For instance, assuming that $\int |v|^p f_0(dv) <\infty$ for some $p>8$, the result
gives a rate of order $N^{-1/3}$, provided that
$\EE (E_N-\mathcal{E})^2$ and $\W_2^2(\law(\mathbf{V}_0^{(2)}),(f_0^{(2)})^{\otimes N})$
converge to $0$ at the same rate or faster. Notice also that $\lambda_N$
coincides with the \emph{spectral gap} in $L^2$ of the associated generator
of the particle system, which was computed in \cite{carlen-carvalho-loss2000}
(although with a factor 2 due to a different rate of the collision times).
The restriction $p\neq 8$ comes from the fact that
the proof of Theorem \ref{thm:main} makes use of a general chaocity
result for i.i.d.\ sequences found in \cite[Theorem 1]{fournier-guillin2013};
including the case $p=8$ would produce additional logarithmic terms in the rate,
see \eqref{eq:eps} below.

As in \cite[Corollary 3]{hauray2016}, this $\W_2^2$ propagation of chaos result for the energy
implies the following $\W_4^4$ result for the non-squared system:

\begin{corollary}
	\label{cor:main}
	Let $\mathbf{U}_0 = (U_{1,0},\ldots,U_{N,0})$
	be any vector of i.i.d.\ and $f_0$-distributed random variables,
	and let $\tilde{\gamma} = \frac{p-4}{2p}\ind_{p<8} + \frac{p-4}{3p-8}\ind_{p> 8}$.
	Under the same assumptions as in Theorem \ref{thm:main}, we have for all $t\geq 0$,
	\begin{align*}
	\EE \W_4^4(\bar{\mathbf{V}}_t, f_t)
	&\leq \frac{C}{N^{\tilde{\gamma}}}
	+ C \EE(E_N-\mathcal{E})^2
	+ C e^{-\lambda_N t} \EE \left[\frac{1}{N} \sum_{i=1}^N (V_{i,0}^2-U_{i,0}^2)^2\right] \\
	& \qquad \qquad \qquad \qquad {} + C e^{-t} \EE \left[\frac{1}{N} \sum_{i=1}^N (V_{i,0}-U_{i,0})^4\right].
	\end{align*}
\end{corollary}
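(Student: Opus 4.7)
Following the strategy of \cite[Corollary 3]{hauray2016}, the plan is to lift the $\W_2^2$ estimate on the squared system provided by Theorem~\ref{thm:main} to a $\W_4^4$ estimate on the non-squared system, exploiting a sign-alignment property produced by the Hauray parametrization~\eqref{eq:vv*2}. The elementary inequality $(a-b)^4\leq(a^2-b^2)^2$ for $a,b\in\RR$ of equal sign will be the central algebraic observation.

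\textbf{Triangle inequality and coupling.} I will extend the given i.i.d.\ initial vector $\mathbf{U}_0$ to a process $\mathbf{U}_t=(U_{1,t},\ldots,U_{N,t})$ with $U_{i,t}\sim f_t$ marginally for every $t$ and $i$, running each $U_i$ as a nonlinear Kac process synchronized with the particle system: at every collision event of the $V$-system involving particle $i$ (occurring at rate $1$ per particle), $U_i$ also undergoes a collision at the same instant and uses the same uniform angle $\theta$, its partner being an independent fresh $f_t$-distributed sample. By the $\W_4$ triangle inequality and convexity,
\begin{equation*}
\EE\W_4^4(\bar{\mathbf{V}}_t,f_t) \;\leq\; 8\,\EE\W_4^4(\bar{\mathbf{V}}_t,\bar{\mathbf{U}}_t) + 8\,\EE\W_4^4(\bar{\mathbf{U}}_t,f_t).
\end{equation*}

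\textbf{Sign alignment and reduction to the squared system.} Let $\tau_i$ denote the first collision time of particle $i$; it is $\mathrm{Exp}(1)$-distributed, independent of the initial data, and $\PP(\tau_i>t)=e^{-t}$. By~\eqref{eq:vv*2}, each post-collisional velocity is of the form $\pm\sqrt{\,\cdot\,}\cos\theta$ (or $\pm\sqrt{\,\cdot\,}\sin\theta$) with the same $\theta$ in both systems, so on $\{\tau_i\leq t\}$ the values $V_{i,t}$ and $U_{i,t}$ share a common sign, while on $\{\tau_i>t\}$ one has $(V_{i,t},U_{i,t})=(V_{i,0},U_{i,0})$. Applying the sign inequality yields
\begin{equation*}
\EE(V_{i,t}-U_{i,t})^4 \;\leq\; \EE(V_{i,t}^2-U_{i,t}^2)^2 + e^{-t}\,\EE(V_{i,0}-U_{i,0})^4,
\end{equation*}
and averaging over $i$ bounds the first triangle term above, using $\W_4^4(\bar{\mathbf{V}}_t,\bar{\mathbf{U}}_t)\leq\tfrac{1}{N}\sum_i(V_{i,t}-U_{i,t})^4$.

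\textbf{Applying Theorem~\ref{thm:main} to the squared coupling.} The squared coupled process $(\mathbf{V}_t^{(2)},\mathbf{U}_t^{(2)})$ is precisely of the type analysed by Theorem~\ref{thm:main}: the same proof produces a particle-wise estimate on $\EE[\tfrac{1}{N}\sum_i(V_{i,t}^2-U_{i,t}^2)^2]$ of the form $C/N^\gamma + C\EE(E_N-\mathcal{E})^2 + Ce^{-\lambda_N t}\EE[\tfrac{1}{N}\sum_i(V_{i,0}^2-U_{i,0}^2)^2]$, the last term arising because the initial Wasserstein distance $\W_2^2(\law(\mathbf{V}_0^{(2)}),(f_0^{(2)})^{\otimes N})$ is dominated by $\EE[\tfrac{1}{N}\sum_i(V_{i,0}^2-U_{i,0}^2)^2]$ through the given initial coupling. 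The residual term $\EE\W_4^4(\bar{\mathbf{U}}_t,f_t)$ is controlled by the Fournier--Guillin empirical-measure estimate \cite[Theorem~1]{fournier-guillin2013}, using that the $p$-th moment of $f_t$ is preserved from $f_0$; the two cases $p<8$ and $p>8$ in the exponent $\tilde{\gamma}$ correspond to the two regimes in that theorem.

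\textbf{Main obstacle.} The delicate point is the construction in the first two paragraphs above: the coupling must simultaneously enforce the sign alignment after the first collision (for which shared collision times and shared angles are essential), keep the marginal law of each $U_{i,t}$ equal to $f_t$, and stay close enough to a truly i.i.d.\ sample for the Fournier--Guillin bound to apply at the marginal level. The synchronized-collisions prescription above delivers the sign alignment and the correct marginals at the cost of introducing mild correlations among the $U_i$; as in \cite{hauray2016}, one checks that this residual correlation does not spoil the empirical-measure estimate, if necessary by inserting a further triangle-inequality step to a genuinely i.i.d.\ reference. Once the coupling is in place, the rest of the argument is a direct combination of Theorem~\ref{thm:main} and \cite[Theorem~1]{fournier-guillin2013}.
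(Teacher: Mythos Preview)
Your overall architecture matches the paper's proof: triangle inequality $\EE\W_4^4(\bar{\mathbf{V}}_t,f_t)\leq C\EE\W_4^4(\bar{\mathbf{V}}_t,\bar{\mathbf{U}}_t)+C\EE\W_4^4(\bar{\mathbf{U}}_t,f_t)$, sign alignment after the first jump to reduce $(V_{i,t}-U_{i,t})^4$ to $(V_{i,t}^2-U_{i,t}^2)^2$ on $\{\tau_i\le t\}$, and then the $h_t$ estimate \eqref{eq:ht} from the proof of Theorem~\ref{thm:main}. However, two points in your write-up are genuinely off.

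\textbf{The coupling is not the one you describe.} You build $\mathbf{U}_t$ by giving each $U_i$ ``an independent fresh $f_t$-distributed sample'' as collision partner. That construction preserves the marginal law and gives the sign alignment, but it does \emph{not} allow you to invoke the proof of Theorem~\ref{thm:main}. That proof (see \eqref{eq:dVi2Ui2}) crucially uses the optimality \eqref{eq:intFixi2} of the partner $F_{i,t}(\mathbf{U}_{t^-},\xi)$ to bound $\EE\int_{A_i}(U_*^2-F^2)^2\tfrac{d\xi}{N-1}=\EE\W_2^2(\bar{\mathbf{U}}_{i,t}^{(2)},f_t^{(2)})=O(N^{-\gamma})$. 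With a fresh independent sample $W\sim f_t$ instead, the corresponding cross term is $\EE(U_{j,t}^2-W^2)^2=2\operatorname{var}(f_t^{(2)})$, a constant, and the Gronwall argument for $h_t$ collapses. The paper uses the \emph{same} $\mathbf{U}_t$ defined in \eqref{eq:dUi} (with the optimal $F$) for both the sign alignment and the squared estimate; the Corollary is literally a continuation of the proof of Theorem~\ref{thm:main}, reusing its \eqref{eq:ht}.

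\textbf{The exponent $\tilde{\gamma}$ does not come from Fournier--Guillin alone.} If $(U_{i,t})_i$ were i.i.d., \cite[Theorem~1]{fournier-guillin2013} would give $\EE\W_4^4(\bar{\mathbf{U}}_t,f_t)\leq CN^{-\eta}$ with $\eta=\min(1/2,(p-4)/p)$, which is strictly larger than $\tilde{\gamma}$ for every $p>4$. The paper controls this term via Lemma~\ref{lem:decoupling}: one first proves a decoupling estimate $\W_4^4(\law^n(\mathbf{U}_t),f_t^{\otimes n})\leq C(n/N)^{(p-4)/p}$ (Lemma~\ref{lem:decoupling0}, a second coupling with genuinely independent nonlinear processes $\tilde U_i$), combines it with Fournier--Guillin on an $n$-sample through \eqref{eq:EWq}, and then \emph{optimizes over $n$}. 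The two cases $p\lessgtr 8$ in $\tilde{\gamma}$ arise from the two regimes of $\eta$ only after this optimization, which is also why $\tilde{\gamma}<\gamma$. Your ``Main obstacle'' paragraph gestures at the needed triangle-inequality-to-i.i.d.\ step, but underestimates it: it is exactly where the exponent $\tilde{\gamma}$ comes from, not a harmless check.
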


Notice that $\tilde{\gamma} < \gamma$ for all $p>4$, thus the rate
obtained is slower than the one of Theorem \ref{thm:main}
(although we can easily deduce a rate $N^{-\gamma}$ in $\W_4^4$ for the law of \emph{one} particle).
For instance, if $f_0$
has finite moment of order $p>8$, Corollary \ref{cor:main} gives a chaos
rate of $N^{-1/4}$ in $\W_4^4$; but if $f_0$ has finite moments of all orders,
it yields a rate of almost $N^{-1/3}$.


Note that when $p$ is close to 4, the chaos rates provided by these
results are very slow. The following theorem provides a good rate
assuming only that $f_0$ has finite moment of order $4+\epsilon$:

\begin{theorem}
	\label{thm:second}
	Assume that $\int_\RR |v|^p f_0(dv) < \infty$ for some $p>4$,
	and that $\sup_N \EE V_{1,0}^4 < \infty$.
	Then, there exists a constant $C$
	depending only on $p$, on $\int_\RR |v|^p f_0(dv)$ and on $\sup_N \EE V_{1,0}^4$,
	such that for all $t\geq 0$,
	\[
	\EE\W_2^2(\bar{\mathbf{V}}_t,f_t)
	\leq \frac{C \log^2 N}{N^{1/3}} + C \W_2^2(\law(\mathbf{V}_0),f_0^{\otimes N}).
	\]
\end{theorem}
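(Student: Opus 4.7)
The plan is to combine a Cortez--Fontbona-type coupling with the Hauray parametrization \eqref{eq:vv*2} and the Fournier--Guillin estimate for empirical measures of i.i.d.\ sequences. I first introduce an auxiliary system $\mathbf{U}_t = (U_{1,t},\ldots,U_{N,t})$ of (marginally) i.i.d.\ nonlinear McKean processes with $U_{i,t} \sim f_t$, coupled to $\mathbf{V}_t$ as follows: at $t=0$, $\mathbf{U}_0$ is optimally coupled to $\mathbf{V}_0$ in $\W_2$; and whenever $\mathbf{V}$ has a collision of pair $(i,j)$ at time $t$ with angle $\theta$, we simultaneously update $U_i \mapsto \sqrt{U_i^2+W^{*2}}\cos\theta$ and $U_j \mapsto \sqrt{U_j^2+\tilde W^{*2}}\sin\theta$ using the same $\theta$, where $W^*, \tilde W^* \sim f_t$ are drawn through the $\W_2$-optimal couplings of $\bar{\mathbf{V}}_{i,t}$ (resp.\ $\bar{\mathbf{V}}_{j,t}$) with $f_t$. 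This preserves the marginal law $U_{i,t}\sim f_t$ while making $(V_j,W^*)$ and $(V_i,\tilde W^*)$ as close as possible in $L^2$ on average.

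Setting $M_t := \EE\,\tfrac{1}{N}\sum_i (V_{i,t}-U_{i,t})^2$, the reverse triangle inequality in $\RR^2$ yields
\[
\bigl(\sqrt{V_i^2+V_j^2}-\sqrt{U_i^2+W^{*2}}\bigr)^2 \leq (V_i-U_i)^2 + (V_j-W^*)^2.
\]
A semimartingale computation for $\sum_k (V_{k,t}-U_{k,t})^2$---averaging over the uniform $\theta$ (which produces a factor $1/2$) and summing over pairs (each particle belonging to $N-1$ of them)---then produces a differential inequality of the form
\[
\tfrac{d}{dt}M_t \leq -\tfrac{1}{2}M_t + \tfrac{1}{2}\EE\,\W_2^2(\bar{\mathbf{V}}_t,f_t),
\]
where the second term arises as the average optimal-transport cost of the fresh partners. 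Combining with the triangle inequality $\EE\W_2^2(\bar{\mathbf{V}}_t,f_t) \leq 2M_t + 2\EE\W_2^2(\bar{\mathbf{U}}_t,f_t)$ and the Fournier--Guillin estimate $\EE\W_2^2(\bar{\mathbf{U}}_t,f_t) \leq CN^{-1/2}$ (valid in dimension $1$ under the moment hypothesis $p>4$) closes the system in principle.

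The main obstacle is that this naive closure loses contraction: the gain $-\tfrac{1}{2}M_t$ is exactly absorbed by the $+M_t$ coming from $\EE\W_2^2(\bar{\mathbf{V}}_t,f_t) \leq 2M_t + \ldots$. To recover a uniform-in-time bound I would introduce a truncation at a level $R_N$ of polylogarithmic size, splitting particles into those with $|V_{i,t}|,|U_{i,t}|\leq R_N$ (for which a refined version of the above bound actually contracts, since $\sqrt{\cdot}$ becomes more strongly contracting for larger arguments) and those of larger magnitude (whose contribution is controlled via a Markov-type estimate using the hypothesis $\sup_N\EE V_{1,0}^4 < \infty$, propagated to all times via a standard moment estimate on the Kac system). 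Balancing the truncation error against the Fournier--Guillin error and applying Gr\"onwall's inequality to the resulting contractive ODE yields a uniform-in-time bound $M_t \leq C\log^2 N /N^{1/3} + M_0$, with $M_0 = \W_2^2(\law(\mathbf{V}_0),f_0^{\otimes N})$. A final triangle inequality $\EE\W_2^2(\bar{\mathbf{V}}_t,f_t) \leq 2M_t + CN^{-1/2}$ then delivers the claimed estimate.
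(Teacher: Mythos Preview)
Your approach diverges substantially from the paper's, and the key step where you try to recover contraction via a truncation at scale $R_N$ is not justified. You correctly identify the obstacle: after the reverse triangle inequality, the gain $-\tfrac12 M_t$ is wiped out (in fact you end up with $+\tfrac12 M_t$, not $0$), so Gr\"onwall gives only exponential growth. But the claim that ``$\sqrt{\cdot}$ becomes more strongly contracting for larger arguments'' does not translate into a usable improvement of your differential inequality. The stronger contraction of $x\mapsto\sqrt{x}$ at large $x$ helps when you work in the \emph{energy} variables $(V^2-U^2)^2$ (this is exactly why Theorem~\ref{thm:main} succeeds for the squared system), but in the \emph{non-squared} metric $(V-U)^2$ the post-collision sign, not the magnitude, is what matters, and a truncation from above does not create a spectral gap. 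No choice of $R_N$ will turn your inequality into a contractive one; at best you would reproduce the non-uniform bound $C(1+t)^2 N^{-1/3}$ of \cite{cortez-fontbona2016}. A secondary issue: your $U_i$'s share jump times and are therefore \emph{not} independent, so the Fournier--Guillin bound $\EE\W_2^2(\bar{\mathbf U}_t,f_t)\le CN^{-1/2}$ cannot be invoked directly; a further decoupling step (Lemma~\ref{lem:decoupling0} in the paper) is needed.

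The paper's proof bypasses the contraction problem entirely by a time-splitting argument. It takes as input two black-box results: (a) the non-uniform propagation of chaos estimate $\EE\W_2^2(\bar{\mathbf V}_t,f_t)\le C\W_2^2(\law(\mathbf V_0),f_0^{\otimes N})+C(1+t)^2 N^{-1/3}$ from \cite{cortez-fontbona2016}, and (b) the exponential relaxation to equilibrium $\W_2^2(\law(\mathbf V_t),\mathcal U_N)\le Ce^{-\alpha t}$ and $\W_2^2(f_t,f_\infty)\le Ce^{-\alpha t}$ from \cite{hauray2016}, together with the elementary estimate $\W_2^2(\mathcal U_N,f_\infty^{\otimes N})\le CN^{-1/2}+C\W_2^2(\law(\mathbf V_0),f_0^{\otimes N})$. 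One then uses (a) for $t\le t_*:=\tfrac{\log N}{3\alpha}$ and (b) for $t\ge t_*$; the factor $(1+t_*)^2$ is precisely the origin of the $\log^2 N$. So the uniformity in time comes from the \emph{equilibrium} contraction of Hauray, not from any contraction in the coupling estimate itself.
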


To the best of our knowledge, these are the first uniform propagation of chaos results
for Kac's 1D particle system; they will be proven in Section \ref{sec:proofs}.
Similar results for the law of $k$ particles can also be stated.
The rates are explicit and of order $N^{-1/3}$
(almost, in Corollary \ref{cor:main} and Theorem \ref{thm:second}),
assuming enough moments of $f_0$. This is quite reasonable,
given that in general the optimal rate of chaocity for an i.i.d.\ sequence is $N^{-1/2}$,
see \cite[Theorem 1]{fournier-guillin2013}.
Notice that in these results, the initial condition $\mathbf{V}_0$ is not restricted
to have fixed (non-random) mean energy, and can thus be chosen at convenience.
For instance, it can have distribution $f_0^{\otimes N}$, thus the term
$\EE (E_N-\mathcal{E})^2$ in Theorem \ref{thm:main} and Corollary \ref{cor:main}
is easily seen to be of order $1/N$,
while the terms $\W_2^2(\law(\mathbf{V}_0^{(2)}),(f_0^{(2)})^{\otimes N})$,
$\sum_i (V_{i,0}^2-U_{i,0}^2)^2$, $\sum_i (V_{i,0}-U_{i,0})^4$ and
$\W_2^2(\law(\mathbf{V}_0),f_0^{\otimes N})$ all vanish. 
Or one can assume normalized energy (i.e., $E_N = \mathcal{E}$ a.s.),
provided that one can control the remaining terms.

We remark that, although one could use the general
functional techniques of \cite{mischler-mouhot2013} in the present context,
the rates obtained with these techniques are likely to be much slower than the ones presented here.
%
%

The proof of our results mainly relies on the parametrization \eqref{eq:vv*2} introduced
in \cite{hauray2016}, and on a coupling argument developed in \cite{cortez-fontbona2016}
to relate the behaviour of the particle system and the limit jump process
(the \emph{nonlinear process}).
We remark however that,
while the proof of Theorem \ref{thm:main} makes use of the \emph{techniques}
of \cite{hauray2016} and \cite{cortez-fontbona2016},
the proof of Theorem \ref{thm:second} directly combines
the \emph{results} found in these references.

\section{Construction}
\label{sec:construction}

We now give a specific construction of the particle system
and couple it with a suitable system of nonlinear processes,
following \cite{cortez-fontbona2016}.
Consider a Poisson point measure $\N(dt,d\theta,d\xi,d\zeta)$ on $\RR_+\times[0,2\pi)\times [0,N) \times [0,N)$
with intensity $\frac{dt d\theta d\xi d\zeta \ind_\mathcal{G}(\xi,\zeta)}{4 \pi (N-1)}$,
where $\mathcal{G} := \{(\xi,\zeta) \in [0,N)^2: \ii(\xi) \neq \ii(\zeta)\}$
and $\ii(\xi) := \lfloor \xi \rfloor + 1$. In words, the measure $\N$ picks collision times $t\in\RR_+$
at rate $N/2$, and for each such $t$, it also independently samples an angle $\theta$
uniformly at random from $[0,2\pi)$ and a pair $(\xi,\zeta)$ uniformly from the set $\mathcal{G}$
(note that the area of $\mathcal{G}$ is $N(N-1)$). The pair $(\ii(\xi),\ii(\zeta))$ gives
the indices of the particles that jump at each collision.
Using the parametrization \eqref{eq:vv*2},
we define the particle system $\mathbf{V}_t = (V_{1,t},\ldots,V_{N,t})$
as the solution to
\begin{equation}
\label{eq:dVi}
dV_{i,t} = \int_0^{2\pi} \int_{A_i} \left[ \sqrt{V_{i,t^-}^2 + V_{\ii(\xi),t^-}^2 } \cos \theta
- V_{i,t^-} \right] \N_i(dt,d\theta,d\xi),
\end{equation}
for all $i\in\{1,\ldots,N\}$,
where $A_i := [0,N) \setminus [i-1,i)$, and $\N_i$ is the point measure defined as
\begin{equation}
\label{eq:Ni}
\N_i(dt,d\theta,d\xi)
= \N(dt,d\theta,[i-1,i), d\xi) + \N(dt,d\theta - \pi/2,d\xi,[i-1,i)),
\end{equation}
where the $-\pi/2$ is to transform sinus to cosinus.
Clearly, $\N_i$ is a Poisson point measure on $\RR_+ \times [0,2\pi)\times A_i$
with intensity $\frac{dt d\theta d\xi}{2\pi (N-1)}$.
The initial condition $\mathbf{V}_0 = (V_{1,0},\ldots,V_{N,0})$
is some random vector with exchangeable components, independent of $\N$.

The \emph{nonlinear process}
(introduced by Tanaka \cite{tanaka1979} in the context of the Boltzmann equation for
Maxwell molecules) is a stochastic jump-process having marginal laws $(f_t)_{t\geq 0}$,
and it is the probabilistic counterpart of \eqref{eq:BK}.
It represents the trajectory of a fixed particle inmersed in the infinite population,
and it is obtained, for instance, as the solution to \eqref{eq:dVi} when one replaces
$V_{\ii(\xi),t^-}$ (which is a $\xi$-realization of the (random)
measure $\bar{\mathbf{V}}_{i,t^-} = \frac{1}{N-1} \sum_{j\neq i} \delta_{V_{j,t^-}}$)
with a realization of $f_t$.

The key idea, introduced in \cite{cortez-fontbona2016}, is to define,
for each $i\in\{1,\ldots,N\}$, a nonlinear process $U_{i,t}$
that mimics as closely as possible the dynamics
of $V_{i,t}$, which is achieved using a suitable realization of $f_t$
at each collision.
More specifically: the collection $\mathbf{U}_t = (U_{1,t},\ldots,U_{N,t})$ is defined
as the solution to
\begin{equation}
\label{eq:dUi}
dU_{i,t} = \int_0^{2\pi} \int_{A_i} \left[ \sqrt{U_{i,t^-}^2 + F_{i,t}^2(\mathbf{U}_{t^-},\xi) } \cos \theta
- U_{i,t^-} \right] \N_i(dt,d\theta,d\xi),
\end{equation}
for all $i\in\{1,\ldots,N\}$.
Here, $F_{i}$ is a measurable mapping
$\RR_+ \times \RR^N \times A_i \ni (t,\mathbf{x},\xi) \mapsto F_{i,t}(\mathbf{x},\xi)$
such that for all $(t,\mathbf{x})$ and any random variable $\xi$ which is
uniformly distributed on $A_i$, the pair $(x_{\ii(\xi)},F_{i,t}(\mathbf{x},\xi))$
is an optimal coupling between $\bar{\mathbf{x}}_i = \frac{1}{N-1} \sum_{j\neq i} \delta_{x_j}$
and $f_t$ with respect to the cost function $c(x,y) = (x^2-y^2)^2$.
Thus,
\begin{equation}
\label{eq:intFixi2}
\int_{A_i} (x_{\ii(\xi)}^2 - F_{i,t}^2(\mathbf{x},\xi))^2 \frac{d\xi}{N-1}
= \W_2^2(\bar{\mathbf{x}}_i^{(2)},f_t^{(2)}).
\end{equation}
We refer to \cite[Lemma 3]{cortez-fontbona2016} for a proof of existence of such a mapping
(here we use a different cost, but our proof works for any cost that is continuous and bounded from below,
in order to use a measurable selection result of optimal transference plans, such as
\cite[Corollary 5.22]{villani2009}). That lemma also shows that
for any $i\neq j\in\{1,\ldots,N\}$, any random vector $\mathbf{X} \in \RR^N$
with exchangeable components
and any bounded and Borel measurable $\phi:\RR \to \RR$, we have
\begin{equation}
\label{eq:EintphiF}
\EE \int_{i-1}^i \phi(F_{i,t}(\mathbf{X},\xi)) d\xi = \int_\RR \phi(v)f_t(dv).
\end{equation}

The initial conditions $U_{1,0},\ldots,U_{N,0}$ are taken independently and with
law $f_0$. For instance, they can be chosen such that the pair $(\mathbf{V}_0,\mathbf{U}_0)$
is an optimal coupling between $\law(\mathbf{V}_0)$ and $f_0^{\otimes N}$ with respect
to the cost function $(x^2-y^2)^2$, so that
$\EE \frac{1}{N} \sum_i (V_{i,0}^2-U_{i,0}^2)^2 = \W_2^2(\law(\mathbf{V}_0^{(2)}), (f_0^{(2)})^{\otimes N})$
(this is done in the proof of Theorem \ref{thm:main}, but, in 
general, $\mathbf{U}_0$ can be any random vector with law $f_0^{\otimes N}$).

Strong existence and uniqueness of solutions $\mathbf{V}_t = (V_{1,t},\ldots,V_{N,t})$ 
and $\mathbf{U}_t = (U_{1,t},\ldots,U_{N,t})$ for \eqref{eq:dVi}
and \eqref{eq:dUi} are straightforward: since the total rate
of $\N$ is finite over finite time intervals, those equations are nothing but recursions
for the values of the processes at the (timely ordered) jump times.
Also, the collection of pairs $(V_1,U_1),\ldots,(V_N,U_N)$
is clearly exchangeable.

Every $U_{i,t}$ is a nonlinear process, thus $\law(U_{i,t}) = f_t$ for all $t$.
Note however that $U_{i,t}$ and $U_{j,t}$ have simultaneous jumps,
and consequently they are \emph{not} independent.
As in \cite{cortez-fontbona2016},
in order to obtain the desired results,
we will need to show that they become \emph{asymptotically} independent as $N\to\infty$,
which is achieved using a second coupling, see Lemma \ref{lem:decoupling0} below.

\section{Proofs}
\label{sec:proofs}


We will need the following propagation of moments result.

\begin{lemma}
	\label{lem:moments}
	Assume that $\int_\RR |v|^{p} f_0(dv) < \infty$ for some $p\geq 2$.
	Then there exists $C>0$ depending only on $p$ and $\int_\RR |v|^p f_0(dv)$ such that
	$\int_\RR |v|^p f_t(dv) < C$ for all $t\geq 0$.
\end{lemma}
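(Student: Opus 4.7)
The plan is to derive the ODE satisfied by $m_p(t) := \int_\RR |v|^p f_t(dv)$ from the weak form of \eqref{eq:BK}, close it with a Povzner-type pointwise inequality, and conclude via a standard ODE comparison. The case $p = 2$ is immediate from conservation of energy, so I focus on $p > 2$.

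Testing \eqref{eq:BK} against $\phi(v) = |v|^p$ and using $v\cos\theta - v_*\sin\theta = \sqrt{v^2+v_*^2}\,\cos(\theta + \alpha)$ with $\theta$ uniform on $[0,2\pi)$, one obtains
$$\frac{1}{2\pi}\int_0^{2\pi} |v'|^p\, d\theta = C_p (v^2 + v_*^2)^{p/2}, \qquad C_p := \frac{1}{2\pi}\int_0^{2\pi} |\cos\theta|^p\, d\theta,$$
and the same identity for $|v_*'|^p$. Symmetrizing the weak form in $(v,v_*)$ yields
$$m_p'(t) = \int_\RR\int_\RR \Bigl[C_p (v^2+v_*^2)^{p/2} - \tfrac{1}{2}(|v|^p + |v_*|^p)\Bigr] f_t(dv) f_t(dv_*).$$

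The crux is a Povzner-type inequality: for $p > 2$ there exist $\delta \in (0, 1)$ and $K > 0$ such that
$$C_p (v^2+v_*^2)^{p/2} \leq (1-\delta)\, \tfrac{1}{2}(|v|^p + |v_*|^p) + K\bigl(v^2 |v_*|^{p-2} + |v|^{p-2} v_*^2\bigr) \qquad \forall\, v, v_* \in \RR.$$
By homogeneity and symmetry one may reduce to $|v_*| \leq |v|$ and set $x = |v_*|/|v| \in [0,1]$, reducing the claim to the scalar inequality
$$C_p(1 + x^2)^{p/2} \leq (1-\delta)\,\tfrac{1}{2}(1+x^p) + K(x^2 + x^{p-2}), \quad x \in [0,1].$$
At $x = 0$ this holds strictly since $C_p < C_2 = 1/2$ (because $|\cos\theta|^p < |\cos\theta|^2$ almost everywhere when $p > 2$), so any $\delta \in (0,\,1-2C_p)$ leaves slack at the origin; away from $0$, the positive term $K(x^2 + x^{p-2})$ dominates any finite gap once $K$ is chosen large, and a compactness argument on $[0,1]$ delivers admissible constants.

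Inserting this estimate into the ODE and using $\int\int |v|^{p-2} v_*^2\, f_t(dv)f_t(dv_*) = m_{p-2}(t)\,\mathcal{E}$ with $\mathcal{E} = m_2$ conserved, I obtain
$$m_p'(t) \leq -\delta\, m_p(t) + 2K \mathcal{E}\, m_{p-2}(t) \leq -\delta\, m_p(t) + 2K\mathcal{E}\, m_p(t)^{(p-2)/p},$$
the last step being Jensen's inequality for the probability measure $f_t$. The right-hand side is strictly negative once $m_p(t) \geq (2K\mathcal{E}/\delta)^{p/2}$, so $m_p(t) \leq \max\{m_p(0),\,(2K\mathcal{E}/\delta)^{p/2}\}$ uniformly in $t \geq 0$. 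The main obstacle is the Povzner inequality above: the naive convexity bound $(v^2+v_*^2)^{p/2} \leq 2^{p/2-1}(|v|^p + |v_*|^p)$ combined with $C_p < 1/2$ only yields $m_p' \leq (C_p 2^{p/2} - 1)\,m_p$, and the coefficient is positive for every $p > 2$. What refines this is that the near-diagonal region $|v|\approx |v_*|$, where the pointwise gain can really beat the loss, is precisely where $v^2|v_*|^{p-2} + |v|^{p-2}v_*^2$ is comparable to $|v|^p + |v_*|^p$; on the off-diagonal that dominates $m_p$ the contractive factor $(1-\delta)$ does the work, while the diagonal is absorbed into the correction term, which is in turn controlled by the conserved energy together with $m_{p-2}$.
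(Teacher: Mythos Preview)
Your argument is correct. The paper does not actually prove this lemma; it simply cites \cite[Lemma~5]{cortez-fontbona2016}, so there is no in-paper approach to compare against. Your route---derive the moment ODE, apply a Povzner-type splitting exploiting $C_p<1/2$ for $p>2$, then close with Jensen and an ODE comparison---is the standard one for Maxwellian-type kernels and is almost certainly what the cited reference does as well. The reduction of the Povzner inequality to the one-variable inequality on $[0,1]$ via homogeneity, the slack at $x=0$ from $C_p<1/2$, and the compactness argument for choosing $K$ are all sound; the resulting differential inequality $m_p'\le -\delta m_p + 2K\mathcal{E}\,m_p^{(p-2)/p}$ indeed yields the uniform bound $m_p(t)\le\max\{m_p(0),(2K\mathcal{E}/\delta)^{p/2}\}$, and since $\mathcal{E}\le m_p(0)^{2/p}$ the constant depends only on $p$ and $m_p(0)$ as required.
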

\begin{proof}
	See the proof of \cite[Lemma 5]{cortez-fontbona2016}. 
\end{proof}

\begin{lemma}
	\label{lem:decorrelation}
	Assume that $\int_\RR v^4 f_0(dv) < \infty$.
	Then, there exists a constant $C$ depending only on $\int_\RR v^4 f_0(dv)$, such that for any $i\neq j$,
	\[
	|\textnormal{cov}(U_{i,t}^2,U_{j,t}^2)|
	\leq (1-e^{-t})\frac{C}{N}.
	\]
\end{lemma}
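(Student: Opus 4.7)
The plan is to derive a linear ODE for $c(t) := \EE[U_{1,t}^2 U_{2,t}^2] - \mathcal{E}^2$; by exchangeability and $\EE U_{i,t}^2 = \mathcal{E}$, this quantity equals $\textnormal{cov}(U_{i,t}^2, U_{j,t}^2)$ for any $i\neq j$. Since $U_{1,0}$ and $U_{2,0}$ are independent, $c(0)=0$, so an ODE of the form $c'(t) + \kappa_N c(t) = R(t)$ with $\kappa_N$ close to $1$ and $|R(t)|\leq C/N$, combined with Gronwall's inequality, will yield the stated bound.

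The key tool is the jump product rule,
\[
d(U_1^2 U_2^2) = U_{2,t^-}^2\, dU_{1,t}^2 + U_{1,t^-}^2\, dU_{2,t}^2 + d[U_1^2,U_2^2]_t.
\]
For the diagonal generator terms, a short computation using the crucial fact that $F_{i,t}(\mathbf{U},\xi)$ has law $f_t$ for $\xi\sim\textnormal{Unif}(A_i)$ \emph{deterministically} in $\mathbf{U}$ (this being exactly how the optimal coupling is set up) gives $\mathcal{L}_i[u^2]=-u^2/2+\mathcal{E}/2$; the resulting contribution to $\frac{d}{dt}\EE[U_1^2 U_2^2]$ is therefore precisely $-\EE[U_1^2 U_2^2]+\mathcal{E}^2$. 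The covariation term is supported on simultaneous $1$--$2$ jumps, i.e.\ atoms of $\N$ with $\{\ii(\xi_0),\ii(\zeta_0)\}=\{1,2\}$, occurring at rate $1/(N-1)$. Unravelling the two symmetric configurations, carefully tracking the $\theta-\pi/2$ shift from \eqref{eq:Ni}, and using $\EE\cos^4\theta=\EE\sin^4\theta=3/8$, $\EE\cos^2\theta\sin^2\theta=1/8$, should yield a covariation rate of the form $\frac{1}{8(N-1)}\EE\bigl[U_1^2 U_2^2+G_{12}G_{21}-3(U_1^2 G_{21}+G_{12}U_2^2)\bigr]$, where $G_{ij}(\mathbf{U}):=\int_{[j-1,j)} F_{i,t}^2(\mathbf{U},\zeta)\,d\zeta$.

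Assembling these pieces gives $c'(t)+\kappa_N c(t)=R(t)$ with $\kappa_N=1-1/(8(N-1))$ and a residual $R(t)$ involving only $\mathcal{E}^2$, $\EE[G_{12}G_{21}]$ and $\EE[U_i^2 G_{ij}]$, all divided by $8(N-1)$. To control $R(t)$, I would combine Jensen's inequality with \eqref{eq:EintphiF} to get $\EE[G_{ij}^2]\leq\int v^4 f_t(dv)$, note $\EE[U_i^4]=\int v^4 f_t(dv)$, and invoke Lemma~\ref{lem:moments} with $p=4$ to bound this uniformly in $t$ by a constant depending only on $\int v^4 f_0(dv)$; Cauchy--Schwarz on the cross terms then yields $|R(t)|\leq C/(N-1)$. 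Since $\kappa_N\in[7/8,1]$ for $N\geq 2$, an integrating-factor computation delivers $|c(t)|\leq C(1-e^{-t})/N$.

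The main obstacle will be the bookkeeping of the covariation term: besides decomposing the simultaneous $1$--$2$ jumps into two symmetric cases and propagating the $\theta-\pi/2$ twist (which turns $\cos^2$ into $\sin^2$ for one of the two particles), the truly crucial step is identifying the cancellation that makes $R(t)$ be of order $1/N$ rather than $O(1)$. This cancellation rests on $\bar F_i(\mathbf{U}):=\frac{1}{N-1}\int_{A_i}F_{i,t}^2(\mathbf{U},\xi)\,d\xi$ being \emph{almost surely} equal to $\mathcal{E}$ and not merely equal in expectation; the weaker property would leave an unavoidable $O(1)$ residual and wreck the bound.
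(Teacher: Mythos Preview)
Your approach is correct and essentially the same as the paper's: both derive a linear ODE for $h_t=\EE[U_{1,t}^2U_{2,t}^2]$ by separating the single-particle jump contributions (which give exactly $-h_t+\mathcal{E}^2$, relying on the almost-sure identity $\frac{1}{N-1}\int_{A_i}F_{i,t}^2(\mathbf{U},\xi)\,d\xi=\mathcal{E}$ that you correctly flag as crucial) from the simultaneous $1$--$2$ jump contributions, which are $O(1/N)$ by rate and fourth-moment bounds. The only difference is organizational: the paper does a direct six-case analysis of the jump types and simply bounds the two joint-jump increments crudely by $\pm C/N$, whereas you package things via the It\^o product rule and go further than needed by computing the covariation term precisely enough to extract $\kappa_N=1-\tfrac{1}{8(N-1)}$; the cruder route already yields the exact coefficient $-1$ in the ODE and hence the factor $(1-e^{-t})$ directly.
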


\begin{proof}
	We will estimate $h_t := \EE(U_{i,t}^2 U_{j,t}^2)$.
	From \eqref{eq:dUi} we have
	\begin{equation}
	\label{eq:dUi2Uj2}
	\begin{split}
	dh_t
	&= \EE \int_0^{2\pi} \int_{[0,N)^2}
	\left[
	\ind_{\{\ii(\xi) = i, \ii(\zeta) = j\}} \Delta_1
	+ \ind_{\{\ii(\xi) = j, \ii(\zeta) = i\}} \Delta_2  \right. \\
	& \qquad\qquad {} + \ind_{\{\ii(\xi) = i, \ii(\zeta) \neq j\}} \Delta_3
	+ \ind_{\{\ii(\xi) \neq j, \ii(\zeta) = i\}} \Delta_4 \\
	& \qquad\qquad \left. {} + \ind_{\{\ii(\xi) \neq i, \ii(\zeta) = j\}} \Delta_5
	+ \ind_{\{\ii(\xi) = j, \ii(\zeta) \neq i\}} \Delta_6
	\right]
	\N(dt,d\theta,d\xi,d\zeta),
	\end{split}
	\end{equation}
	where $\Delta_1$ and $\Delta_2$ are the increments of $U_{i,t}^2 U_{j,t}^2$
	when $U_{i,t}$ and $U_{j,t}$ have a simultanous jump, and $\Delta_3,\ldots,\Delta_6$
	are the increments when only one of them jumps. For instance,
	\begin{align*}
	\Delta_1 &= (U_{i,t^-}^2 + F_{i,t}^2(\mathbf{U}_{t^-},\zeta))\cos^2\theta
	(U_{j,t^-}^2 + F_{j,t}^2(\mathbf{U}_{t^-},\xi))\sin^2\theta - U_{i,t^-}^2 U_{j,t^-}^2, \\
	\Delta_3 &= (U_{i,t^-}^2 + F_{i,t}^2(\mathbf{U}_{t^-},\zeta))\cos^2\theta U_{j,t^-}^2 - U_{i,t^-}^2 U_{j,t^-}^2.
	\end{align*}
	We have for the latter:
	\begin{equation}
	\label{eq:EDelta3}
	\begin{split}
	& \EE \int_0^{2\pi} \int_{[0,N)^2} \ind_{\{\ii(\xi) = i, \ii(\zeta) \neq j\}}
	\Delta_3 \N(dt,d\theta,d\xi,d\zeta) \\
	&= \EE \int_0^{2\pi} \int_{A_i} \left[-(1-\cos^2\theta) U_{i,t}^2 U_{j,t}^2 \right. \\
	& \qquad \qquad \qquad \qquad \left. {} + \cos^2\theta F_{i,t}^2(\mathbf{U}_t,\zeta) U_{j,t}^2\right] \frac{dt d\theta d\zeta}{4\pi(N-1)} \\
	&= \left[-\frac{1}{4} h_t + \frac{1}{4}\mathcal{E}^2\right] dt,
	\end{split}
	\end{equation}
	where we have used that $U_{j,t} \sim f_t$ under $\PP$ and $F_{i,t}(\mathbf{U}_t,\zeta) \sim f_t$
	under $\frac{d\zeta\ind_{A_i}(\zeta)}{N-1}$.
	The same identity holds for $\Delta_4, \Delta_5$ and $\Delta_6$.
	On the other hand for $\Delta_1$  we can simply use the Cauchy-Schwarz inequality
	and the fact that $\EE \int_{j-1}^j F_{i,t}^4(\mathbf{U}_t, \zeta) d\zeta = \int v^4 f_t(dv) \leq C$
	(thanks to \eqref{eq:EintphiF} and Lemma \ref{lem:moments}), thus obtaining
	\[
	-\frac{C}{N} dt
	\leq \EE \int_0^{2\pi} \int_{[0,N)^2} \ind_{\{\ii(\xi) = i, \ii(\zeta) = j\}}
	\Delta_1 \N(dt,d\theta,d\xi,d\zeta)
	\leq \frac{C}{N}dt.
	\]
	The same estimate holds true for $\Delta_2$. Using this and \eqref{eq:EDelta3}
	in \eqref{eq:dUi2Uj2}, we deduce that
	$-h_t + \mathcal{E}^2 - \frac{C}{N} \leq \partial_t h_t \leq - h_t + \mathcal{E}^2 + \frac{C}{N}$,
	and multiplying by $e^t$ and integrating yields
	$
	(e^t-1)(\mathcal{E}^2-\frac{C}{N})
	\leq e^t h_t - h_0 \leq
	(e^t-1)(\mathcal{E}^2+\frac{C}{N})
	$.
	But $U_{i,0}$ and $U_{j,0}$ are independent, thus $h_0 = \mathcal{E}^2$,
	and then $\mathcal{E}^2 - (1-e^{-t})\frac{C}{N} \leq h_t \leq \mathcal{E}^2 + (1-e^{-t})\frac{C}{N}$.
	Since $\textnormal{cov}(U_{i,t}^2,U_{j,t}^2) = h_t - \mathcal{E}^2$, the conclusion follows. 
\end{proof}

For a given exchangeable random vector $\mathbf{X}$ on $\RR^N$, denote
$\law^n(\mathbf{X})$ the joint law of its $n$ first components.
The following lemma provides a decoupling property for the system
of nonlinear processes $\mathbf{U}_t$.

\begin{lemma}
	\label{lem:decoupling0}
	Assume $\int_\RR v^4 f_0(dv) < \infty$. Then there exists a constant $C>0$,
	depending only on $\int_\RR v^4 f_0(dv)$,
	such that for all $n\leq N$ and $t\geq 0$,
	\[
	\W_2^2(\law^n(\mathbf{U}_t^{(2)}), (f_t^{(2)})^{\otimes n} )
	\leq C \frac{n}{N}.
	\]
	Also, if $\int_\RR |v|^p f_0(dv) < \infty$ for some $p>4$,
	then there exists a constant $C>0$, depending only on $p$ and $\int_\RR |v|^p f_0(dv)$,
	such that for all $n\leq N$ and $t\geq 0$,
	\[
	\W_4^4(\law^n(\mathbf{U}_t), f_t^{\otimes n} )
	\leq C \left(\frac{n}{N}\right)^{\frac{p-4}{p}}.
	\]
\end{lemma}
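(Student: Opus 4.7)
The plan is to adapt the ``second coupling'' technique of \cite{cortez-fontbona2016}, which realizes an i.i.d.\ family $(\tilde U_{1,t},\ldots,\tilde U_{n,t})$ of nonlinear processes jointly defined with $(U_{1,t},\ldots,U_{n,t})$. Concretely, I would introduce independent Poisson measures $\tilde{\N}_i$ (each with the same intensity as $\N_i$), constructed so that $\tilde{\N}_i$ agrees with $\N_i$ outside of the atoms of $\N$ corresponding to collisions between particle $i$ and any other particle $j\leq n$---a set of rate only $O(n/N)$. Drive $\tilde U_i$ by $\tilde{\N}_i$ with i.i.d.\ $f_t$-distributed partners at each jump (via a deterministic quantile transform); this renders the $\tilde U_i$'s genuine i.i.d.\ nonlinear processes while keeping them tightly coupled to the $U_i$'s.

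For the first bound, I would apply an It\^o-type computation to $h_t^{(i)} := \EE(U_{i,t}^2-\tilde U_{i,t}^2)^2$, analogous to the proof of Lemma \ref{lem:decorrelation}. The nonlinear collision drift contributes a negative (relaxation) term proportional to $-h_t^{(i)}$, while the positive inhomogeneity splits into: (i) contributions from the shared Poisson atoms, of total rate $O(n/N)$ per coordinate; and (ii) the optimal-coupling discrepancy $\EE\W_2^2(\bar{\mathbf{U}}_i^{(2)},f_t^{(2)})$ appearing via \eqref{eq:intFixi2}, which is itself controlled via Lemma \ref{lem:decorrelation} after an exchangeability decomposition into covariance terms. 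Integrating the resulting linear differential inequality in $t$ and averaging over $i=1,\ldots,n$ yields the claimed $\W_2^2(\law^n(\mathbf{U}_t^{(2)}),(f_t^{(2)})^{\otimes n})\leq Cn/N$.

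For the second bound, I would convert the $\W_2^2$ estimate on squared velocities to a $\W_4^4$ estimate on the originals via moment interpolation. On the event that $\mathrm{sgn}(U_{i,t})=\mathrm{sgn}(\tilde U_{i,t})$ (which is arranged by choosing an optimal sign-lift when passing from the coupling of $f_t^{(2)}$ to $f_t$), the identity $(U_i-\tilde U_i)(U_i+\tilde U_i)=U_i^2-\tilde U_i^2$ gives $(U_i-\tilde U_i)^4\leq (U_i^2-\tilde U_i^2)^2$; on the complement, the trivial bound $(U_i-\tilde U_i)^4\leq 8(U_i^4+\tilde U_i^4)$ combined with H\"older's inequality against the $p$-th moment bound from Lemma \ref{lem:moments} controls the contribution, and the probability of sign mismatch is itself small. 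Balancing the two parts and using the concavity of $x\mapsto x^{(p-4)/p}$ when averaging over $i$ produces the rate $(n/N)^{(p-4)/p}$. The main technical obstacle is the construction of an i.i.d.\ auxiliary system $\tilde{\mathbf{U}}$ that is simultaneously $L^2$-close to $\mathbf{U}$---the shared-atom structure of the $\N_i$'s must be removed in a controlled way while retaining exact independence of the $\tilde U_i$'s---together with the sign-alignment step required for the $\W_4^4$ interpolation.
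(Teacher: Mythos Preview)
Your overall strategy---the ``second coupling'' of \cite{cortez-fontbona2016} for the $\W_2^2$ bound, followed by sign-alignment plus H\"older interpolation for the $\W_4^4$ bound---is exactly the paper's approach. However, your item (ii) in the $\W_2^2$ argument reflects a misreading of the construction that, if followed literally, would either introduce circularity or break the estimate.

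The crucial point is that $\tilde U_i$ must use the \emph{same} partner $F_{i,t}(\mathbf{U}_{t^-},\xi)$ as $U_i$ at every shared atom (this is how the paper defines \eqref{eq:dtildeUi}). Then at a simultaneous jump one has $(U_i'^2 - \tilde U_i'^2)^2 = \cos^4\theta\,(U_{i,t^-}^2 - \tilde U_{i,t^-}^2)^2$ exactly: the $F^2$ terms cancel and the increment $\Delta_1 = -(1-\cos^4\theta)(U_{i,t^-}^2-\tilde U_{i,t^-}^2)^2$ is purely contractive, giving $\partial_t h_t \le -\tfrac{5}{8}h_t + Cn/N$. No term of the form $\EE\W_2^2(\bar{\mathbf{U}}_{i,t}^{(2)},f_t^{(2)})$ ever appears, and Lemma~\ref{lem:decorrelation} is not used here at all. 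If instead you equip $\tilde U_i$ with an \emph{independent} $f_t$-partner (as your phrase ``i.i.d.\ $f_t$-distributed partners via a deterministic quantile transform'' suggests), the partner mismatch contributes an $O(1)$ source term per unit time and the bound fails. Moreover, the quantity $\EE\W_2^2(\bar{\mathbf{U}}_{i,t}^{(2)},f_t^{(2)})$ cannot be ``decomposed into covariance terms'' and handled by Lemma~\ref{lem:decorrelation}: that lemma controls only $\mathrm{var}\bigl(\frac1N\sum_j U_{j,t}^2\bigr)$, whereas the empirical-to-limit Wasserstein distance is precisely the object bounded in Lemma~\ref{lem:decoupling}, which \emph{relies on} the present lemma---so invoking it here would be circular. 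In short: recall that $U_i$'s partner in \eqref{eq:dUi} is already $F_{i,t}(\mathbf{U}_{t^-},\xi)$, not $U_{\ii(\xi),t^-}$, so there is no ``discrepancy'' to control.

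For the $\W_4^4$ part your interpolation scheme is correct, but the paper's sign-alignment mechanism is dynamic rather than a static ``optimal sign-lift'': at every simultaneous jump both $U_{i,t}$ and $\tilde U_{i,t}$ inherit the sign of $\cos\theta$, so $\PP(S_{i,t}^c)\le n/(2N)$ comes directly from the proportion of non-shared jumps. This is cleaner than a static lift, which would require you to argue separately that the sign of $U_{i,t}$ is independent of the squared coupling.
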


\begin{proof}
	The argument uses a coupling construction, as in the proof of \cite[Lemma 6]{cortez-fontbona2016}.
	We repeat the important steps here. First, for all $n\in\{2,\ldots,N\}$, the idea is to construct
	$n$ independent nonlinear processes $\tilde{U}_{1,t},\ldots,\tilde{U}_{n,t}$
	such that $\tilde{U}_{i,t}$ remains close to $U_{i,t}$ on average.
	To achieve this, let $\M$ be an independent copy of the Poisson point
	measure $\N$, and define for all $i\in\{1,\ldots,n\}$
	\begin{equation}
	\label{eq:Mi}
	\begin{split}
	\M_i(dt,d\theta,d\xi)
	&= \N(dt,d\theta,[i-1,i), d\xi) \\
	&\qquad {} + \N(dt,d\theta - \pi/2,d\xi,[i-1,i)) \ind_{[n,N)}(\xi) \\
	&\qquad {} + \M(dt,d\theta - \pi/2,d\xi,[i-1,i)) \ind_{[0,n)}(\xi),
	\end{split}
	\end{equation}
	which is a Poisson point measure on $\RR_+ \times [0,2\pi)\times A_i$
	with intensity $\frac{dt d\theta d\xi}{2\pi (N-1)}$, just as $\N_i$.
	We then define $\tilde{U}_{i,t}$ starting with $\tilde{U}_{i,0} = U_{i,0}$ and solving
	an equation similar to \eqref{eq:dUi}, but using $\M_i$ in place of $\N_i$:
	\begin{equation}
	\label{eq:dtildeUi}
	d\tilde{U}_{i,t} = \int_0^{2\pi} \int_{A_i}
	\left[ \sqrt{\tilde{U}_{i,t^-}^2 + F_{i,t}^2(\mathbf{U}_{t^-},\xi) } \cos \theta
	- \tilde{U}_{i,t^-} \right] \M_i(dt,d\theta,d\xi).
	\end{equation}
	In words, the processes $\tilde{U}_{1,t},\ldots,\tilde{U}_{n,t}$ use the same atoms of $\N$
	that $U_{1,t},\ldots,U_{n,t}$ use,
	except for those that produce a joint jump of $U_{i,t}$ and $U_{j,t}$
	for some $i,j\in\{1,\ldots,n\}$, in which case either $\tilde{U}_{i,t}$ or $\tilde{U}_{j,i}$
	does not jump at that instant. To compensate for the missing jumps,
	additional independent atoms, drawn from $\M$, are added to $\M_i$.
	
	It is clear that $\M_1,\ldots,\M_n$ are independent Poisson point measures.
	Using this and the fact that $F_{i,t}(\mathbf{x},\xi)$ has distribution $f_t$
	when $\xi$ is uniformly distributed on $A_i$, one can
	show that $\tilde{U}_{1,t},\ldots,\tilde{U}_{n,t}$ are independent
	nonlinear processes; see the details in the proof of \cite[Lemma 6]{cortez-fontbona2016}.
	
	Thus, $\W_2^2(\law^n(\mathbf{U}_t^{(2)}), (f_t^{(2)})^{\otimes n} )
	\leq \EE \frac{1}{n} \sum_{i=1}^n (U_{i,t}^2 - \tilde{U}_{i,t}^2)^2$,
	and then, to deduce the first bound,
	it suffices to estimate $h_t := \EE (U_{i,t}^2 - \tilde{U}_{i,t}^2)^2$
	for any fixed $i\in\{1,\ldots,n\}$. From \eqref{eq:dUi} and \eqref{eq:dtildeUi}
	we have
	\begin{equation}
	\label{eq:dUitildeUi}
	\begin{split}
	dh_t
	&= \EE \int_0^{2\pi} \int_{A_i} \Delta_1
	[ \N(dt,d\theta,[i-1,i),d\xi) \\
	&\qquad \qquad \qquad \qquad  {} + \N(dt,d\theta - \pi/2, d\xi, [i-1,i)) \ind_{[n,N)}(\xi)] \\
	& \qquad {} +  \EE \int_0^{2\pi} \int_{A_i} \Delta_2 \N(dt,d\theta - \pi/2, d\xi, [i-1,i) ) \ind_{[0,n)}(\xi) \\
	& \qquad {} +  \EE \int_0^{2\pi} \int_{A_i} \Delta_3 \M(dt,d\theta - \pi/2, d\xi, [i-1,i) ) \ind_{[0,n)}(\xi),
	\end{split}
	\end{equation}
	where $\Delta_1$ is the increment of $(U_{i,t}^2 - \tilde{U}_{i,t}^2)^2$ when
	$U_{i,t}$ and $\tilde{U}_{i,t}$ have a simultaneous jump,
	$\Delta_2$ is the increment when only $U_{i,t}$ jumps,
	and $\Delta_3$ is the increment when only $\tilde{U}_{i,t}$ jumps.
	Thanks to the indicator $\ind_{[0,n)}(\xi)$ and Lemma \ref{lem:moments}, the second and third terms
	in \eqref{eq:dUitildeUi} are easily seen to be of order $C\frac{n}{N}$.
	For the first term, we have
	\begin{align*}
	\Delta_1
	&= \left((U_{i,t^-}^2 + F_{i,t}^2(\mathbf{U}_{t^-},\xi))\cos^2\theta 
	- (\tilde{U}_{i,t^-}^2 + F_{i,t}^2(\mathbf{U}_{t^-},\xi))\cos^2\theta\right)^2 \\
	& \qquad\qquad\qquad\qquad\qquad\qquad\qquad {} - (U_{i,t^-}^2 - \tilde{U}_{i,t^-}^2)^2 \\
	&= -(1-\cos^4\theta) (U_{i,t^-}^2 - \tilde{U}_{i,t^-}^2)^2.
	\end{align*}
	Since $\int_0^{2\pi} (1-\cos^4\theta) \frac{d\theta}{2\pi} = \frac{5}{8}$,
	from \eqref{eq:dUitildeUi} we obtain
	$\partial_t h_t
	\leq -\frac{5}{8} h_t + C\frac{n}{N}$ (we have simply discarded the negative term
	with the indicator $\ind_{[n,N)}(\xi)$ in \eqref{eq:dUitildeUi}),
	and since $h_0 = 0$, the estimate for $\W_2^2$ follows from
	Gronwall's lemma:
	\begin{equation}
	\label{eq:W2lawk}
	h_t
	\leq C(1-e^{-5t/8}) \frac{n}{N}
	\leq C\frac{n}{N}.
	\end{equation}
	
	The estimate for $\W_4^4$ can be reduced to the previous one using an argument
	similar to the proof of \cite[Corollary 3]{hauray2016}: for $i\in\{1,\ldots,n\}$,
	call $S_{i,t}$ the event in which $U_{i,t}$ and $\tilde{U}_{i,t}$ have
	the same sign. On $S_{i,t}$ we have
	\[
	(U_{i,t}-\tilde{U}_{i,t})^4
	\leq (U_{i,t}-\tilde{U}_{i,t})^2 (U_{i,t}+\tilde{U}_{i,t})^2
	= (U_{i,t}^2-\tilde{U}_{i,t}^2)^2,
	\]
	and then, using H\"{o}lder's inequality with $a=\frac{p}{p-4}$ and $b=p/4$, we obtain
	\begin{align*}
	\EE  (U_{i,t}-\tilde{U}_{i,t})^4
	&\leq \EE  \ind_{S_{i,t}} (U_{i,t}^2-\tilde{U}_{i,t}^2)^2
	+ \EE \ind_{S_{i,t}^c} (U_{i,t}-\tilde{U}_{i,t})^4 \\
	&\leq \EE  (U_{i,t}^2-\tilde{U}_{i,t}^2)^2
	+ \PP(S_{i,t}^c)^{1/a}  [\EE (U_{i,t}-\tilde{U}_{i,t})^{4b}]^{1/b}.
	\end{align*}
	The first term in the r.h.s.\ of this inequality is bounded by $Cn/N$ thanks to \eqref{eq:W2lawk}, while
	the expectation in the second term is bounded uniformly on $t$ thanks
	to Lemma \ref{lem:moments}. Also, we have $\PP(S_{i,t}^c) \leq n/(2N)$:
	from \eqref{eq:dUi} and \eqref{eq:dtildeUi} we see that
	when the processes $U_{i,t}$ and $\tilde{U}_{i,t}$ have a joint jump, they
	acquire the same sign (the one of $\cos\theta$), and form \eqref{eq:Ni} and \eqref{eq:Mi},
	it is easy to see that this occurs a proportion $1-n/(2N)$ of the jumps on average.
	With all these, we get
	\[
	\W_4^4(\law^n(\mathbf{U}_t), f_t^{\otimes n})
	\leq \EE \frac{1}{n} \sum_{i=1}^n (U_{i,t}-\tilde{U}_{i,t})^4
	\leq C \left(\frac{n}{N}\right)^{1/a},
	\]
	which proves the estimate for $\W_4^4$.
	
\end{proof}

To prove the following lemma, we will need some preliminaries.
For a probability measure $\mu$ on $\RR$, for any $q \geq 1$ and any $n\in\NN$, define
$\varepsilon_{q,n}(\mu):= \EE \W_q^q(\bar{\mathbf{Z}},\mu)$, where $\mathbf{Z} = (Z_1,\ldots,Z_n)$
is an i.i.d.\ and $\mu$-distributed tuple. The best avaliable estimates
for $\varepsilon_{q,n}(\mu)$ can be found in \cite[Theorem 1]{fournier-guillin2013}:
if $\mu$ has finite $r$-moment for some $r>q$, $r\neq 2q$, then there exists a constant $C$ 
depending only on $q$ and $r$ such that for $\eta = \min(1/2, 1-q/r)$, it holds
\begin{equation}
\label{eq:eps}
\varepsilon_{q,n}(\mu) \leq C \frac{\left(\int |x|^r \mu(dx) \right)^{q/r}}{n^\eta}.
\end{equation}

We will also need the following bound, which is a consequence of \cite[Lemma 7]{cortez-fontbona2016}:
given an exchangeable random vector $\mathbf{X}\in\RR^N$ and a probability
measure $\mu$ on $\RR$,
there exists a constant $C$, depending only on the $q$-moments of $\mu$ and $X_1$,
such that for all $n\leq N$,
\begin{equation}
\label{eq:EWq}
\frac{1}{2^{q-1}} \EE \W_q^q(\bar{\mathbf{X}}, \mu)
\leq \W_q^q(\law^n(\mathbf{X}), \mu^{\otimes n})
+ \varepsilon_{q,n}(\mu)
+ C \frac{n}{N}.
\end{equation}

As a consequence of these estimates and Lemma \ref{lem:decoupling0},
we have:

\begin{lemma}
	\label{lem:decoupling}
	Assume that $\int_\RR |v|^{p} f_0(dv) < \infty$ for some $p>4$, $p\neq 8$.
	Then there exists a constant $C$ depending only on $p$ and
	$\int_\RR |v|^p f_0(dv)$  such that for  $\gamma = \min(\frac{1}{3},\frac{p-4}{2p-4})$ and for all $t\geq 0$,
	\[
	\EE \W_2^2(\bar{\mathbf{U}}_t^{(2)}, f_t^{(2)}) \leq \frac{C}{N^\gamma},
	\]
	and for $\tilde{\gamma} = \frac{p-4}{2p}\ind_{p<8} + \frac{p-4}{3p-8}\ind_{p> 8}$,
	\[
	\EE \W_4^4(\bar{\mathbf{U}}_t, f_t) \leq \frac{C}{N^{\tilde{\gamma}}}.
	\]
	Moreover, the same bounds hold with $\bar{\mathbf{U}}_{i,t}^{(2)}$ in place of $\bar{\mathbf{U}}_t^{(2)}$
	and with $\bar{\mathbf{U}}_{i,t}$ in place of $\bar{\mathbf{U}}_t$, respectively.
\end{lemma}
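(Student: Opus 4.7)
The plan is to combine the three available ingredients into a single optimization over an auxiliary parameter $n \in \{1,\ldots,N\}$: the decoupling estimate of Lemma \ref{lem:decoupling0}, the i.i.d.\ empirical approximation rate \eqref{eq:eps}, and the reduction \eqref{eq:EWq} that relates $\mathbb{E}\mathcal{W}_q^q(\bar{\mathbf{X}},\mu)$ to the chaocity of the first $n$ marginals. Applying \eqref{eq:EWq} with $q=2$, $\mathbf{X}=\mathbf{U}_t^{(2)}$ and $\mu=f_t^{(2)}$ yields
\[
\mathbb{E}\mathcal{W}_2^2(\bar{\mathbf{U}}_t^{(2)},f_t^{(2)})
\leq C\bigl[\mathcal{W}_2^2(\law^n(\mathbf{U}_t^{(2)}),(f_t^{(2)})^{\otimes n}) + \varepsilon_{2,n}(f_t^{(2)}) + n/N\bigr].
\]
The first term is bounded by $Cn/N$ via Lemma \ref{lem:decoupling0}. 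For the second, use \eqref{eq:eps} with $q=2$ and $r=p/2$ (legitimate because $p>4$ and the hypothesis $p\neq 8$ ensures $r\neq 2q$); note that $\int|x|^r f_t^{(2)}(dx)=\int v^{2r}f_t(dv)$ is bounded uniformly in $t$ by Lemma \ref{lem:moments}. This gives $\varepsilon_{2,n}(f_t^{(2)})\leq C/n^\eta$ with $\eta=\min(1/2,(p-4)/p)$, and the whole bound becomes $C(n/N + n^{-\eta})$.

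Optimizing by setting $n\simeq N^{1/(1+\eta)}$ (and rounding to an integer, absorbing the resulting constants) produces a rate of $N^{-\eta/(1+\eta)}$. A quick case split on whether $p<8$ (then $\eta=(p-4)/p$, giving exponent $(p-4)/(2p-4)$) or $p>8$ (then $\eta=1/2$, giving exponent $1/3$) recovers exactly $\gamma=\min(1/3,(p-4)/(2p-4))$.

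The $\mathcal{W}_4^4$ statement follows the very same template with $q=4$, $\mathbf{X}=\mathbf{U}_t$, $\mu=f_t$, using now the \emph{second} bound of Lemma \ref{lem:decoupling0}, which replaces $n/N$ by $(n/N)^{(p-4)/p}$. Taking $r=p$ in \eqref{eq:eps} (again needing $p\neq 8$) gives $\varepsilon_{4,n}(f_t)\leq C/n^\eta$ with the same $\eta$, and the combined bound is $C((n/N)^{(p-4)/p} + n^{-\eta})$. Balancing the two terms and redoing the case analysis on $p\lessgtr 8$ yields precisely $\tilde\gamma=(p-4)/(2p)\ind_{p<8}+(p-4)/(3p-8)\ind_{p>8}$.

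For the ``moreover'' statement, I would simply observe that $(U_{j,t})_{j\neq i}$ is an exchangeable family of $N-1$ nonlinear processes to which the above scheme applies verbatim, with $N$ replaced by $N-1$ in every estimate (Lemma \ref{lem:decoupling0} is stated for any $n\leq N$, and the bound $Cn/(N-1)$ is absorbed into the rate). The main obstacle is essentially bookkeeping: making the two balancing computations yield the exact exponents $\gamma$ and $\tilde\gamma$ stated in the lemma, and keeping careful track of which regime of $p$ saturates $\eta=1/2$ versus $\eta=(p-4)/p$; everything else is a routine chaining of the three lemmas.
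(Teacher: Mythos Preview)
Your proposal is correct and follows essentially the same approach as the paper: apply \eqref{eq:EWq} together with Lemma~\ref{lem:decoupling0} and \eqref{eq:eps}, then optimize over $n$ with the same choices $n\simeq N^{1/(1+\eta)}$ (for $\W_2^2$) and $n\simeq N^{1/(1+a\eta)}$ with $a=p/(p-4)$ (for $\W_4^4$). One minor point of bookkeeping: in the $\W_4^4$ case, \eqref{eq:EWq} also contributes an additive $Cn/N$ term, but since $(p-4)/p<1$ this is dominated by $(n/N)^{(p-4)/p}$ and can be absorbed, exactly as in the paper.
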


\begin{proof}

	Using the first part of Lemma \ref{lem:decoupling0}
	and \eqref{eq:eps}-\eqref{eq:EWq} with $\mu = f_t^{(2)}$, $q=2$ and $r = p/2$,
	we obtain $\EE \W_2^2(\bar{\mathbf{U}}_t^{(2)},f_t^{(2)}) \leq C[n^{-\eta} + n/N]$
	for $\eta = \min(1/2, 1-4/p)$
	($C$ depends on the $p/2$ moments of $f_t^{(2)}$, which are controlled uniformly on $t$
	thanks to Lemma \ref{lem:moments}).
	Taking $n = \lfloor N^{1/(1+\eta)} \rfloor$ gives the estimate for $\W_2^2$.
	The estimate for $\W_4^4$ follows similarly: using the second
	part of Lemma \ref{lem:decoupling0} and \eqref{eq:eps}-\eqref{eq:EWq}
	with $\mu = f_t$, $q=4$ and $r = p$, we obtain
	$\EE \W_4^4(\bar{\mathbf{U}}_t,f_t) \leq C[n^{-\eta} + (n/N)^{1/a}]$,
	for $a = \frac{p}{p-4}$ and the same $\eta = \min(1/2, 1-4/p)$.
	Taking $ n= \lfloor N^{1/(1+a \eta)} \rfloor$ gives the desired bound.
	
	The estimates for $\bar{\mathbf{U}}_{i,t}^{(2)}$ and $\bar{\mathbf{U}}_{i,t}$ are obtained similarly. 
\end{proof}

We can now prove Theorem  \ref{thm:main}:

\begin{proof}[Proof of Theorem \ref{thm:main}]
	For some $i\in\{1,\ldots,N\}$ fixed,
	we will estimate the quantity $h_t := \EE(V_{i,t}^2-U_{i,t}^2)^2$.
	Let us first shorten notation: call $V = V_{i,t^-}$, $V_* = V_{\ii(\xi),t^-}$,
	$U = U_{i,t^-}$, $F = F_{i,t}(\mathbf{U}_{t^-},\xi)$, and $U_* = U_{\ii(\xi),t^-}$.
	From \eqref{eq:dVi} and \eqref{eq:dUi}, we have
	\begin{align}
	\notag
	d h_t 
	&= \EE \int_0^{2\pi} \int_{A_i} \left[ ( V^2 + V_*^2 - U^2 - F^2)^2 \cos^4\theta
	- (V^2 - U^2)^2 \right] \N_i(dt,d\theta,d\xi) \\
	\begin{split}
	\label{eq:dVi2Ui2}
	&= \EE \int_0^{2\pi} \int_{A_i}
	\left[ (\cos^4\theta-1)(V^2-U^2)^2 \right. \\
	& \qquad\qquad\qquad
	{} +\cos^4\theta (V_*^2 - U_*^2)^2
	+\cos^4\theta (U_*^2 - F^2)^2 \\
	& \qquad\qquad\qquad
	{} + 2 \cos^4\theta (V^2-U^2+V_*^2-U_*^2)(U_*^2-F^2) \\
	& \qquad\qquad\qquad \left.
	{} + 2 \cos^4\theta (V^2-U^2)(V_*^2-U_*^2) 
	\right] \frac{dt d\theta d\xi}{2\pi(N-1)}.
	\end{split}
	\end{align}
	Clearly $\EE \int_{A_i} (V_*^2 - U_*^2)^2 \frac{d\xi}{N-1} = h_t$, by exchangeability. Thus,
	the first and second terms in the integral of \eqref{eq:dVi2Ui2} yield
	$- h_t dt \int_0^{2\pi} (1-2\cos^4\theta) \frac{d\theta}{2\pi} = -\frac{1}{4} h_t dt$.
	From \eqref{eq:intFixi2}, we have
	$\EE \int_{A_i}(U_*^2-F^2)^2 \frac{d\xi}{N-1} = \EE \W_2^2(\bar{\mathbf{U}}_{i,t}^{(2)},f_t^{(2)})\leq CN^{-\gamma}$,
	thanks to Lemma \ref{lem:decoupling}.
	Using the Cauchy-Schwarz inequality, the third and fourth terms in the integral of \eqref{eq:dVi2Ui2}
	are thus bounded above by $[C N^{-\gamma} + Ch_t^{1/2}N^{-\gamma/2}]dt$. For the remaining term,
	since $\frac{1}{N}\sum_j V_{j,t}^2 = E_N$ for all $t\geq 0$ a.s., we have
	\begin{align*}
	&\EE (V_{i,t}^2 - U_{i,t}^2) \int_{A_i} (V_{\ii(\xi),t}^2 - U_{\ii(\xi),t}^2) d\xi \\
	&= \EE (V_{i,t}^2-U_{i,t}^2)
	\left(-V_{i,t}^2 + U_{i,t}^2 + NE_N - \sum_{j=1}^N U_{j,t}^2 \right) \\
	&\leq -h_t + h_t^{1/2} \left[\EE\left(\sum_{j=1}^N ( U_{j,t}^2 - \mathcal{E})\right)^2 \right]^{1/2}
	+ N h_t^{1/2} \left[\EE(E_N-\mathcal{E})^2\right]^{1/2} \\
	&= -h_t + h_t^{1/2} \left[N\text{var}(U_{i,t}^2) + N(N-1)\textnormal{cov}(U_{i,t}^2,U_{j,t}^2)\right]^{1/2}
	+ N h_t^{1/2} B_N^{1/2},
	\end{align*}
	where in the last line $j\neq i$ is any fixed index,
	and $B_N := \EE(E_N-\mathcal{E})^2$. Thanks to lemmas \ref{lem:moments} and \ref{lem:decorrelation},
	the latter is bounded by $-h_t + Ch_t^{1/2}N^{1/2} + N h_t^{1/2} B_N^{1/2}$;
	thus, the fifth term of \eqref{eq:dVi2Ui2}
	is controlled by $-\frac{3}{4(N-1)}h_tdt + Ch_t^{1/2}[N^{-1/2} + B_N^{1/2}] dt$.
	Gathering all these estimates, we get from \eqref{eq:dVi2Ui2}
	\begin{align*}
	\partial_t h_t
	&\leq -\left(\frac{1}{4} + \frac{3}{4(N-1)} \right) h_t
	+ Ch_t^{1/2}[N^{-\gamma/2} + N^{-1/2}+ B_N^{1/2}]
	+ CN^{-\gamma} \\
	&\leq -\lambda_N h_t + Ch_t^{1/2}[N^{-\gamma/2} + B_N^{1/2}] + CN^{-\gamma}.
	\end{align*}
	Using a version of Gronwall's lemma (see for instance \cite[Lemma 4.1.8]{ambrosio-gigli-savare2008}),
	we obtain
	\begin{equation}
	\label{eq:ht}
	h_t \leq Ce^{-\lambda_N t} h_0 + C N^{-\gamma} + CB_N. 
	\end{equation}
	Finally, note that $\EE \W_2^2(\bar{\mathbf{V}}_t^{(2)},f_t^{(2)})
	\leq 2 \EE \W_2^2(\bar{\mathbf{V}}_t^{(2)},\bar{\mathbf{U}}_t^{(2)})
	+2 \EE \W_2^2(\bar{\mathbf{U}}_t^{(2)},f_t^{(2)})$, 
	and, since $\EE \W_2^2(\bar{\mathbf{V}}_t^{(2)},\bar{\mathbf{U}}_t^{(2)})
	\leq \EE \frac{1}{N} \sum_{j} (V_{j,t}^2 - U_{j,t}^2)^2 = h_t$
	by exchangeability, the conclusion follows from \eqref{eq:ht},
	the first part of Lemma \ref{lem:decoupling},
	and choosing $(\mathbf{V}_0,\mathbf{U}_0)$ as
	an optimal coupling with respect to the cost $(x^2-y^2)^2$, so
	$h_0 = \W_2^2(\law(\mathbf{V}_0^{(2)}), (f_0^{(2)})^{\otimes N})$. 
\end{proof}

\begin{proof}[Proof of Corollary \ref{cor:main}]
	The argument is the same as in the proof of \cite[Corollary 3]{hauray2016},
	and we repeat it here for convenience of the reader.
	From \eqref{eq:dVi} and \eqref{eq:dUi}, it is clear that $V_{i,t}$ and $U_{i,t}$
	have the same sign (the one of $\cos\theta$) after the first jump. And if they
	have the same sign, then
	\[
	(V_{i,t} - U_{i,t})^4
	\leq (V_{i,t} - U_{i,t})^2 (V_{i,t} + U_{i,t})^2
	= (V_{i,t}^2 - U_{i,t}^2)^2.
	\]
	Call $\tau_i$ the time of the first jump of $V_{i,t}$. Then
	\begin{align*}
	\EE (V_{i,t} - U_{i,t})^4
	&\leq \EE \ind_{\{\tau_i \leq t\}} (V_{i,t}^2 - U_{i,t}^2)^2
	+\EE \ind_{\{\tau_i > t\}} (V_{i,t} - U_{i,t})^4 \\
	&\leq \EE (V_{i,t}^2 - U_{i,t}^2)^2
	+\EE \ind_{\{\tau_i > t\}} (V_{i,0} - U_{i,0})^4. 
	\end{align*}
	For the second term we use the fact that $\tau_i$ is independent of $(V_{i,0},U_{i,0})$ and
	has exponential distribution with parameter 1, which gives $e^{-t} \EE (V_{i,0} - U_{i,0})^4$.
	For the first term we simply use \eqref{eq:ht}.
	This yields
	\begin{align*}
	\EE \frac{1}{N} \sum_i (V_{i,t}-U_{i,t})^4
	&\leq CN^{-\gamma} + C e^{-\lambda_N t} \EE \frac{1}{N} \sum_i (V_{i,0}^2-U_{i,0}^2)^2 \\
	& \qquad  {} + C \EE(E_N - \mathcal{E})^2 + C e^{- t} \EE \frac{1}{N} \sum_i (V_{i,0}-U_{i,0})^4.
	\end{align*}
	Finally, we have $\EE\W_4^4(\bar{\mathbf{V}}_t,f_t) \leq C\EE\W_4^4(\bar{\mathbf{V}}_t,\bar{\mathbf{U}}_t)
	+ C\EE\W_4^4(\bar{\mathbf{U}}_t,f_t)$, and the result follows since the first
	term is bounded above by $C\EE \frac{1}{N} \sum_i (V_{i,t}-U_{i,t})^4$
	and using the second part of Lemma \ref{lem:decoupling}
	on the second term (recall that $\tilde{\gamma} < \gamma$). 
\end{proof}

To prove Theorem \ref{thm:second}, we will need the results of \cite{hauray2016}.
They provide exponential contraction rates in $\W_4^4$
for both the particle system and the nonlinear process, which in turn
imply contraction in $\W_2^2$.
More specifically: assuming $\sup_N \EE V_{1,0}^4 < \infty$
and $\int_\RR v^4 f_0(dv)<\infty$, one has for some $\alpha>0$
\begin{equation}
\label{eq:W2contraction}
\W_2^2(\law(\mathbf{V}_t), \mathcal{U}_N) \leq Ce^{-\alpha t}
\quad \text{ and } \quad
\W_2^2(f_t, f_\infty) \leq Ce^{-\alpha t},
\end{equation}
where $\mathcal{U}_N$ and $f_\infty$ are the stationary distributions
for the particle system and nonlinear process, respectively.
Namely, $\mathcal{U}_N$ is the uniform distribution on the sphere
$\{\mathbf{x}\in\RR^N: \frac{1}{N}\sum_i x_i^2 = r^2\}$ with
$r^2$ chosen randomly with the same law as $E_N = \frac{1}{N}\sum_i V_{i,0}^2$,
and $f_\infty$ is the Gaussian distribution with mean $0$ and variance $\mathcal{E} = \int v^2 f_0(dv)$
(note that, although the results of \cite{hauray2016} are stated in the
case $E_N = 1$ a.s., it is easy to generalize them to the case of particle systems
starting a.s.\ with the same random energy).  

Also,
it is easy to verify that
\begin{equation}
\label{eq:W2UNfinfty}
\W_2^2( \mathcal{U}_N, f_\infty^{\otimes N})
\leq CN^{-1/2} + C\W_2^2(\law(\mathbf{V}_0), f_0^{\otimes N}).
\end{equation}
Indeed, given a random vector $\mathbf{Z} = (Z_1,\ldots,Z_N)$ with law $f_\infty^{\otimes N}$
independent of $\mathbf{V}_0$,
call $Q^2 = \frac{1}{N} \sum_{i=1}^N Z_i^2$ and define $Y_i = E_N^{1/2}Z_i / Q$,
so that $\mathbf{Y} = (Y_1,\ldots,Y_N)$ has distribution $\mathcal{U}_N$
thanks to the fact that $f_\infty^{\otimes N}$ is rotation invariant.
A straightforward computation shows that
$\frac{1}{N} \sum_i (Z_i-Y_i)^2 = (Q-E_N^{1/2})^2 \leq 2(Q-\mathcal{E}^{1/2})^2 + 2(E_N^{1/2}-\mathcal{E}^{1/2})^2$,
which is bounded above by $2\W_2^2(\bar{\mathbf{Z}},f_\infty) + 2\W_2^2(\bar{\mathbf{V}}_0,f_0)$,
since $\int v^2 f_\infty(dv) = \int v^2 f_0(dv) = \mathcal{E}$
(in general, for measures $\mu$ and $\nu$ on $\RR$ with $Q_\mu^2 = \int x^2 \mu(dx)$,
one has for any $X \sim \mu$ and $\tilde{X}\sim \nu$:
$\EE(X-\tilde{X})^2 \geq Q_\mu^2 + Q_\nu^2 - 2 Q_\mu Q_\nu = (Q_\mu-Q_\nu)^2$).
This coupling gives
$\W_2^2( \mathcal{U}_N, f_\infty^{\otimes N}) \leq \EE \frac{1}{N} \sum_i (Z_i-Y_i)^2
\leq 2 \EE \W_2^2(\bar{\mathbf{Z}},f_\infty) + 4\EE\W_2^2(\bar{\mathbf{V}}_0,\bar{\mathbf{U}}_0)
+4\EE\W_2^2(\bar{\mathbf{U}}_0,f_0)$,
where the first and third terms are controlled by $CN^{-1/2}$ thanks to \eqref{eq:eps},
and the second term is controlled by
$4\EE \frac{1}{N} \sum_i (V_{i,0} - U_{i,0})^2 = 4\W_2^2(\law(\mathbf{V}_0),f_0^{\otimes N})$,
this time choosing the initial conditions $(\mathbf{V}_0,\mathbf{U}_0)$ as an optimal
coupling with respect to the usual quadratic cost $(x-y)^2$.

We are now ready to prove Theorem \ref{thm:second}:

\begin{proof}[Proof of Theorem \ref{thm:second}]
	The argument combines the contraction results of \cite{hauray2016} and
	the propagation of chaos results of \cite{cortez-fontbona2016}.
	Clearly,
	\begin{equation}
	\label{eq:EW2Vf}
	\begin{split}
	& \EE\W_2^2(\bar{\mathbf{V}}_t , f_t) \\
	&\leq C \EE [ \W_2^2(\bar{\mathbf{V}}_t, \bar{\mathbf{V}}_\infty)
	+ \W_2^2(\bar{\mathbf{V}}_\infty,\bar{\mathbf{Z}}_\infty)
	+ \W_2^2(\bar{\mathbf{Z}}_\infty,f_\infty)
	+ \W_2^2(f_\infty,f_t)].
	\end{split}
	\end{equation}
	Here $\mathbf{V}_\infty$ is a random vector on $\RR^N$
	with law $\mathcal{U}_N$,
	which is also optimally coupled
	to $\mathbf{V}_t$ with respect to the quadratic cost, so $\EE \W_2^2(\bar{\mathbf{V}}_t, \bar{\mathbf{V}}_\infty)
	\leq \EE \frac{1}{N}\sum_i (V_{i,t} - V_{i,\infty})^2 = \W_2^2(\law(\mathbf{V}_t), \law(\mathbf{V}_\infty))$.
	Thus, the first and fourth term are bounded by $Ce^{-\alpha t}$,
	thanks to \eqref{eq:W2contraction}.
	Also, we have chosen $\mathbf{Z}_\infty$ with law $f_\infty^{\otimes N}$ and being optimally coupled
	to $\mathbf{V}_\infty$, so for the second term of \eqref{eq:EW2Vf} we have
	$\EE\W_2^2(\bar{\mathbf{V}}_\infty,\bar{\mathbf{Z}}_\infty)
	\leq \W_2^2(\mathcal{U}_N,f_\infty^{\otimes N})$,
	which is controlled using \eqref{eq:W2UNfinfty}.
	The third term is controlled by $CN^{-1/2}$, thanks to \eqref{eq:eps}.
	With all these estimates, we obtain from \eqref{eq:EW2Vf}:
	\begin{equation}
	\label{eq:EW2Vf_B}
	\EE \W_2^2(\bar{\mathbf{V}}_t,f_t)
	\leq Ce^{-\alpha t} + C\W_2^2(\law(\mathbf{V}_0),f_0^{\otimes N}) + CN^{-1/3}
	\end{equation}
	for some $\alpha>0$. On the other hand, from \cite[Theorem 1]{cortez-fontbona2016}
	we have
	\begin{equation}
	\label{eq:EW2Vf_C}
	\EE \W_2^2(\bar{\mathbf{V}}_t,f_t)
	\leq C\W_2^2(\law(\mathbf{V}_0),f_0^{\otimes N}) + C(1+t)^2N^{-1/3}.
	\end{equation}
	(In \cite{cortez-fontbona2016} the initial distribution of the particle system
	was chosen as $f_0^{\otimes N}$, but the extension to any exchangeable initial condition
	is straightforward). Finally, the result is obtained from \eqref{eq:EW2Vf_B} and \eqref{eq:EW2Vf_C}
	adjusting $t$ and $N$ conveniently: take $t_* = \frac{\log N}{3\alpha}$, so
	\eqref{eq:EW2Vf_B} yields
	$\EE \W_2^2(\bar{\mathbf{V}}_t,f_t) \leq C\W_2^2(\law(\mathbf{V}_0),f_0^{\otimes N}) + CN^{-1/3}$ for $t\geq t_*$,
	whereas \eqref{eq:EW2Vf_C} gives
	$\EE \W_2^2(\bar{\mathbf{V}}_t,f_t) \leq C\W_2^2(\law(\mathbf{V}_0),f_0^{\otimes N}) + C N^{-1/3}\log^2 N$
	for $t\leq t_*$. The result follows. 
\end{proof}

\medskip

\textbf{Acknowledgements.}
The author thanks Joaquin Fontbona and Jean-Fran\c{c}ois Jabir for
very useful suggestions and corrections of earlier versions of this manuscript.

\bibliographystyle{plain}
\bibliography{references.bib}{}

%


\end{document}